

\documentclass[a4paper]{amsart}

\usepackage{amssymb,amscd}
\usepackage{mathrsfs} 
\usepackage[all]{xy}

\usepackage[
backref,
pdfauthor={Han Wu},  
pdftitle={Tate-Shafarevich groups of elliptic curves with nontrivial 2-torsion subgroups},
]{hyperref}
\usepackage[alphabetic,backrefs,lite]{amsrefs}  


\voffset=-8mm  
\vsize=240mm  
\textheight=240mm  
\hsize=138mm  
\textwidth=138mm  
\hoffset=-4mm  
\parskip=4pt
\parindent=12pt

\theoremstyle{plain}
\theoremstyle{definition}

\newtheorem{theorem}{Theorem}[section]
\newtheorem{thm}{Theorem}[section]
\newtheorem{lemma}[theorem]{Lemma}

\newtheorem{cor}[theorem]{Corollary}

\newtheorem{prop}[theorem]{Proposition}

\theoremstyle{definition}

\theoremstyle{remark}
\newtheorem{remark}[theorem]{Remark}


\renewcommand{\AA}{\mathbb{A}}

\newcommand{\QQ}{\mathbb{Q}}

\newcommand{\PP}{\mathbb{P}}

\newcommand{\Acal}{{\mathcal A}}


\DeclareMathOperator{\Ker}{Ker}

\DeclareMathOperator{\Res}{Res}


\usepackage[OT2,T1]{fontenc}
\DeclareSymbolFont{cyrletters}{OT2}{wncyr}{m}{n}
\DeclareMathSymbol{\Sha}{\mathalpha}{cyrletters}{"58}

\newcommand{\defi}[1]{\textsf{#1}} 

\makeatletter
\g@addto@macro\bfseries{\boldmath}  
\makeatother

\begin{document}
	
	\begin{title}
		{Tate-Shafarevich groups of elliptic curves with nontrivial 2-torsion subgroups}  
	\end{title}
	\author{Han Wu}
	\address{University of Science and Technology of China,
		School of Mathematical Sciences,
		No.96, JinZhai Road, Baohe District, Hefei,
		Anhui, 230026. P.R.China.}
	\email{wuhan90@mail.ustc.edu.cn}
	\date{}
	\subjclass[2020]{Primary 14H45; Secondary 11G05, 14G12, 14G05.}
	\keywords{rational points, local-global principle., genus one curves, Brauer-Manin obstruction.}




	\begin{abstract} 
		For any number field, 
		we prove that there exists an elliptic curve defined over this field such that its Shafarevich-Tate group has a nontrivial 2-torsion subgroup.
	\end{abstract} 
	
	\maketitle

	
	\section{Introduction}
	
	Throughout this paper, let $K$ be a number field, and let 
	$\Omega_K$ be the set of all nontrivial places of $K.$ For each $v\in \Omega_K,$ let $K_v$ be the completion of $K$ at $v.$ Let $\AA_K$ be the ring of ad\`eles of $K.$ We fix an algebraic closure $\overline{K}$ of $K.$ Let $X$ be a proper algebraic variety defined over $K.$ Let $\overline{X}$ be the base change of $X$ to $\overline{K}.$

	We say that $X$ violates the \defi{local-global principle} if $X(K_v)\neq\emptyset$ for all $v\in\Omega_K,$ whereas $X(K)=\emptyset.$ 
	
	As a consequence of the Hasse-Minkowski theorem for quadratic forms, the local-global principle holds for smooth, projective and geometrically connected curves of genus-0 over every number field. The first examples of varieties violating the local-global principle are due to Lind \cite{Li40} and independently, but a bit later, to  Reichardt \cite{Re42}. For example, they proved that the genus-$1$ curve, given by the smooth
	projective model of plane curve $2y^2 = 1-17x^4$ over $\QQ,$ violating the local-global principle. Shortly thereafter, Selmer \cite{Se51} gave many genus-$1$ curves violating the local-global principle, of which the simplest one is defined over $\QQ$ by
	$3w_0^3+4w_1^3+5w_2^3=0$ in $\PP^2$ with homogeneous coordinates $(w_0:w_1:w_2).$
	Poonen \cite{Po10a} proved that there exist curves over every number field violating the local-global
	principle. Clark \cite[Section 5 Conjecture 1]{Cl09} conjectured that genus-$1$ curve will suffice, i.e. for any number field, there exists a genus-$1$ curve over this field violating the local-global principle. The author \cite{Wu22c} gave an explicit construction to prove that Clark's conjecture \cite[Section 5 Conjecture 1]{Cl09} holds for any number field not containing $\QQ(i).$ 
	
	Our goal is to prove that Clark's conjecture \cite[Section 5 Conjecture 1]{Cl09} holds for any number field. More exactly, we will prove the following theorem.
	
	\begin{thm}[Theorem \ref{thm: main theorem}]
		For any number field $K,$ there exists an elliptic curve $E$ defined over $K$ such that $\Sha (K,E)[2]\neq 0.$ Here  $\Sha (K,E)[2]$ is the $2$-torsion subgroup of the Shafarevich-Tate group of $E.$
	\end{thm}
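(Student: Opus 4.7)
The plan is to construct, for every number field $K$, an explicit elliptic curve $E/K$ together with a genus-$1$ curve $C/K$ that is a torsor under $E$ of period dividing $2$, such that $C(K_v)\neq\emptyset$ for every $v\in\Omega_K$ but $C(K)=\emptyset$. The class $[C]\in H^1(K,E)$ will then give a nontrivial element of $\Sha(K,E)[2]$. By \cite{Wu22c}, the case $K\not\supseteq\QQ(i)$ is already known; the essentially new case is therefore $K\supseteq\QQ(i)$, where $K$ is totally imaginary and $-1$ is a global square. This is precisely the obstacle: the classical Lind/Reichardt/Selmer constructions rely on $-1$ being a non-square in the global field, so a new ingredient is required.

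The natural framework is descent along a rational $2$-isogeny. I would take $E$ with a rational $2$-torsion point, so $E\colon y^2=x(x^2+bx+c)$ with $b,c\in\Ocal_K$, and consider the isogeny $\phi\colon E\to E'$ quotienting by this point. Classes in $\Sha(K,E)[\phi]\subseteq\Sha(K,E)[2]$ are represented by explicit genus-$1$ curves $C_d$ of the form $d u^2=$ quartic in $x$, parametrized by $d\in K^*/K^{*2}$. I would choose $b,c,d$ by imposing finitely many congruence conditions: at the archimedean places and at the primes dividing $2bcd$, $C_d$ should acquire local points by direct construction; at all remaining places $C_d$ has good reduction and local points follow from Hensel's lemma. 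The techniques for this local side are close in spirit to those of \cite{Wu22c}, with the extra subtlety that all infinite places of $K$ are now complex.

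The crux is proving $C_d(K)=\emptyset$, for which I would exhibit a \BMo{}. Concretely, I would construct a quaternion algebra $A=(\alpha,\beta)\in\Br K(C_d)$ which extends to a class in $\Br(C_d)$, and arrange the parameters so that $\inv_v(A(P_v))$ is a constant $c_v$ independent of $P_v\in C_d(K_v)$ at every place $v$, and $\sum_v c_v\neq 0$ in $\QQ/\ZZ$. The main obstacle, and the novelty of the argument, is producing a workable $\alpha$ to play the role that $-1$ played in the Lind/Reichardt setting when $i\in K$: I expect $\alpha$ must be chosen so that its class in $K^*/K^{*2}$ is controlled by splitting data at a single auxiliary prime $\pfr$ of $K$, whose existence with the required residual behaviour is supplied by Chebotarev's theorem. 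Balancing this choice so that the Brauer-Manin sum is provably nonzero, rather than being killed by the global reciprocity law of class field theory, is the central difficulty of the proof and is where most of the work will lie.
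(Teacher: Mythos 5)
Your proposal is a plan, not a proof: you correctly identify that the hard case is $K\supseteq\QQ(i)$, and you correctly identify that in the explicit-descent framework the central difficulty is manufacturing a quaternion class $\alpha$ to replace the role of $-1$ and then verifying that the resulting \BMo sum is nonzero rather than vanishing by global reciprocity. But you leave exactly that step unresolved, phrasing it as ``I expect $\alpha$ must be chosen so that\ldots'' and ``this is where most of the work will lie.'' As it stands there is no argument that such a choice of $(\alpha,b,c,d)$ exists for every $K$, and this is not a routine verification: it is precisely the obstacle that stopped the explicit construction of \cite{Wu22c} at fields not containing $\QQ(i)$. So the proposal has a genuine gap at its crux.

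The paper takes a fundamentally different, more indirect route that sidesteps the difficulty entirely. Rather than constructing a $2$-covering of an elliptic curve and computing a Brauer--Manin obstruction on it, the paper starts from a degree-$4$ del Pezzo surface $X/K$ violating the local-global principle --- the existence of which over \emph{every} number field is already known by Viray \cite[Theorem 5.0.3]{Vi10} --- and then runs the fibration method of Colliot-Th\'el\`ene--Poonen \cite[Lemma 3.1]{CP00} on a Lefschetz pencil of hyperplane sections. The key geometric input (Propositions \ref{proposition: geometrically integral intersection}--\ref{proposition: existence of genus 1 fiberation and every fibre is integral}) is that one can choose the pencil so that every geometric fiber is integral, which makes the fibration method applicable and produces a genus-$1$ fiber $C$ with points everywhere locally but $C(K)=\emptyset$. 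The Brauer--Manin computation is thus packaged entirely inside the quoted del Pezzo result and never needs to be performed on the curve. Finally, the $2$-torsion conclusion comes not from a $2$-isogeny structure but from the simple observation that $C$ is an intersection of two quadrics in $\PP^3$, hence carries a $0$-cycle of degree $4$, so $4[C]=0$ in $\Sha(K,\mathrm{Jac}(C))$ and therefore $[C]$ or $2[C]$ is a nonzero element of the $2$-torsion. This last step is worth internalizing: your plan tries to force period $2$ by construction, whereas the paper only needs period dividing $4$ and extracts the $2$-torsion element afterwards, which is much cheaper.
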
 
	The way to prove this theorem is to prove the existence of some genus-$1$ curve violating the local-global principle, and this curve has 
	a $0$-cycle of degree $4.$ In order to find this curve, we start with a smooth intersection of two quadrics in $\PP^4,$ which is a del Pezzo surfaces of degree $4,$ violating the local-global principle in Section \ref{section Del Pezzo surfaces}. Then we analysis intersections of this surface with hyperplanes in $\PP^4,$ and find a pencil in this surface in Section \ref{section: Projectively dual varieties of del Pezzo surfaces}. The genus-$1$ curve needed is a intersection of this surface with a hyperplane in the chosen pencil. The existence of local points of this curve is from the fibration method, cf. Section \ref{section: main result}. Then the Jacobian of the chosen curve will meet the needs of Theorem \ref{thm: main theorem}.
	

	\section{Del Pezzo surfaces of degree $4$}\label{section Del Pezzo surfaces}
	
	In this section, we briefly recall some facts on Del Pezzo surfaces of degree $4,$ for simplicity, over a number field.  We refer to \cite{Ko99}, \cite{Po17} and \cite{KST89} for this topic.
	
	A \defi{del Pezzo surface} is a smooth, projective, and geometrically connected surface such that its anticanonical divisor is ample. For a del Pezzo surface $X,$ let $K_X$ be the its canonical divisor. The self-intersection number $K_X^2$ on $X$ defines the \defi{degree} of $X,$ denoted by $d_X.$ Since $-K_X$ is ample, the number $K_X^2=(-K_X)^2$ is positive. Hence the degree $d_X$ is a positive integer. The classification of del Pezzo surfaces (\cite[Theoreom 9.4.4]{Po17}) implies that del Pezzo surfaces are geometrically rational. When $d_X=4,$ by \cite[Proposition III 3.4.3]{Ko99}, the anticanonical divisor $-K_X$ is very ample, so it embeds $X$ as a degree $4$ surface in $\PP^4.$ Geometrically,  del Pezzo surfaces of degree $4$ are the blowing up of $\PP^2$ at $5$ points in general position. 
	
	The following lemma states the equivalence between a del Pezzo surface of degree $4$ and a smooth intersection of two quadrics in $\PP^4.$
	\begin{lemma}\label{lemma: del Pezzo surface and intersection of two quadrics}
		Let $X$ be a $K$-scheme. Then $X$ is a del Pezzo surface of degree $4$ if and only if $X$ can be embedded in $\PP^4$ as a smooth intersection of two quadrics.
	\end{lemma}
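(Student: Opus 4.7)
\medskip

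\noindent\textbf{Proof proposal.}
The \emph{if} direction is the softer one, so I would dispatch it first. Given a smooth complete intersection $X=Q_1\cap Q_2\subset\PP^4$ of two quadrics, iterating the adjunction formula yields
\[
\omega_X\;\cong\;\bigl(\omega_{\PP^4}\otimes\Ocal_{\PP^4}(2)\otimes\Ocal_{\PP^4}(2)\bigr)\big|_X\;\cong\;\Ocal_{\PP^4}(-1)\big|_X,
\]
so $-K_X$ is the restriction of $\Ocal_{\PP^4}(1)$ and is in particular very ample. The Koszul resolution $0\to\Ocal_{\PP^4}(-4)\to\Ocal_{\PP^4}(-2)^{\oplus 2}\to\Ocal_{\PP^4}\to\Ocal_X\to 0$, combined with the standard vanishing of $H^i(\PP^4,\Ocal(k))$ in the pertinent ranges, gives $H^0(X,\Ocal_X)=K$, so $X$ is geometrically connected. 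Finally $K_X^2=(\Ocal_{\PP^4}(-1)|_X)^2=\deg(X)=2\cdot 2=4$ by B\'ezout. Thus $X$ is a del Pezzo surface of degree~$4$.

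For the \emph{only if} direction, suppose $X$ is a del Pezzo surface of degree $4$. The excerpt already provides that $-K_X$ is very ample and embeds $X\hookrightarrow\PP^4$. I would now show the ideal of $X$ contains a pencil of quadrics. Riemann--Roch on the surface $X$, together with $\chi(\Ocal_X)=1$ and $K_X^2=4$, gives
\[
\chi(-2K_X)=1+\tfrac12(-2K_X)(-2K_X-K_X)=1+3K_X^2=13,
\]
and Kodaira vanishing (applied to $-2K_X=K_X+(-3K_X)$ with $-3K_X$ ample) forces $h^i(X,-2K_X)=0$ for $i>0$, so $h^0(X,-2K_X)=13$. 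Comparing with $h^0(\PP^4,\Ocal(2))=15$, the restriction map has kernel of dimension at least $2$, yielding two $K$-linearly independent quadrics $Q_1,Q_2$ in the homogeneous ideal of $X$.

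It remains to promote this containment to the scheme-theoretic equality $X=Q_1\cap Q_2$. I would pick $Q_1,Q_2$ in the pencil so that they form a regular sequence in $\Ocal_{\PP^4}$; since the locus of quadrics of rank $\le 2$ in $\PP(H^0(\PP^4,\Ocal(2)))$ has large codimension, a generic choice in the at-least-one-dimensional pencil works, and in any case the pair can be adjusted by Galois-stable linear combinations. With $Q_1,Q_2$ a regular sequence, $Q_1\cap Q_2$ is Cohen--Macaulay of pure dimension $2$ and of degree $4$ by B\'ezout. Since $X\subseteq Q_1\cap Q_2$ is an integral $2$-dimensional subscheme also of degree $4$, the two schemes agree.

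The main obstacle is the last step: producing two quadrics whose scheme-theoretic intersection really is $X$, with no extra components or embedded primes. Equivalently, one must know that the homogeneous ideal $I(X)\subset K[x_0,\dots,x_4]$ is generated in degree $2$ (so that no excess intersection hides in the pencil). This can be secured either by the regular-sequence argument sketched above, or, more robustly, by a direct Hilbert function computation: comparing $\dim H^0(\PP^4,\Ocal(n))$ with $h^0(X,-nK_X)=1+\tfrac{n(n+1)}{2}\cdot 4$ shows that the two quadrics cut out a subscheme with the same Hilbert polynomial as $X$, forcing equality of ideals.
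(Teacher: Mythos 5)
Your proof is correct and takes a genuinely different, more self-contained route than the paper. For the \emph{only if} direction the paper simply cites Koll\'ar's Theorem~III.3.5.4 to get that the anticanonical embedding lands in an intersection of two quadrics; you instead re-derive this via Riemann--Roch and Kodaira vanishing (giving $h^0(X,-2K_X)=13$ versus $h^0(\PP^4,\Ocal(2))=15$), and then upgrade the resulting pencil to a scheme-theoretic complete intersection by a Cohen--Macaulay plus degree-counting argument. For the \emph{if} direction both arguments rest on adjunction, but you obtain geometric connectedness from the Koszul resolution of $\Ocal_X$ rather than from the Hartshorne connectedness reference the paper invokes. Your version fills in work the paper delegates to the literature, which is a real gain in transparency.

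One small adjustment: the ``rank $\le 2$ has large codimension, so a generic choice works'' remark is not the right justification inside a merely one-dimensional pencil. But no genericity is needed: since $h^0(X,-K_X)=1+K_X^2=5$, the anticanonical image is linearly non-degenerate in $\PP^4$, so no two quadrics vanishing on $X$ can share a linear factor, and hence \emph{any} two $K$-linearly independent quadrics in $I(X)_2$ already form a regular sequence. (Your fallback via Hilbert polynomials also closes the gap, and the ``Galois-stable'' worry is moot since the kernel of the restriction map is already a $K$-vector space.) With that tightening, the argument is complete.
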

	\begin{proof}
		By \cite[Theorem III 3.5.4]{Ko99}, a del Pezzo surface of degree $4$ embedding in $\PP^4$ by $-K_X,$ is a intersection of two quadrics. So $X$ can be embedded in $\PP^4$ as a smooth intersection of two quadrics.
		Conversely, since every irreducible component of a intersection of two different quadrics in $\PP^4$ is of codimension $1$ or $2,$ by \cite[Theorem 7.2]{Ha97}, $X$ is  geometrically connected. Hence $X$ is a smooth, projective, and geometrically connected surface. According to the adjunction formula, a smooth intersection of two quadrics in $\PP^4$ gives a 
		del Pezzo surface of degree $4.$  
	\end{proof}

	The local-global principle can fail for degree-$4$ del Pezzo surfaces. Birch and Swinnerton-Dyer \cite[Theorem 3]{BSD75} firstly gave an example of a del Pezzo surface of
	degree $4$ over $\QQ,$ which violates the local-global principle. It is defined by the following two equations: 
	\begin{equation*}
		\begin{cases}
			uv=x^2-5y^2\\ 
			(u+v)(u+2v)=x^2-5z^2
		\end{cases}
	\end{equation*}
	in $\PP^4$ with homogeneous coordinates $(x:y:z:u:v).$ Viray \cite[Theorem 5.0.3]{Vi10} generalized their counterexample to the following theorem.
	\begin{thm}(Compare to \cite[Theorem 5.0.3]{Vi10})\label{theorem: a del Pezzo surface not LGP}
		Let $K$ be a number field. There exists a del Pezzo surface of
		degree $4$ over $K,$  violating the local-global principle.
	\end{thm}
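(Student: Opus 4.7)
The plan is to upgrade the Birch--Swinnerton-Dyer example displayed above to an arbitrary number field $K$. I would consider the intersection $X\subset\PP^4_K$ of two quadrics of the shape
\[uv=x^2-dy^2,\qquad (u-\alpha v)(u-\beta v)=x^2-dz^2,\]
for a carefully chosen non-square $d\in K^\times$ and auxiliary parameters $\alpha,\beta\in K^\times$, and exploit the quaternion class $\Acal=(d,\,u/v)\in \Br(K(X))$ to produce a Brauer--Manin obstruction. By Lemma \ref{lemma: del Pezzo surface and intersection of two quadrics} any such smooth $X$ is automatically a del Pezzo surface of degree $4$, so the whole proof reduces to choosing $d,\alpha,\beta$ in such a way that local points exist everywhere while the local invariants $\inv_v(\Acal(P_v))$ sum to a nonzero element of $\QQ/\ZZ$.

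The first step is to fix the quadratic extension $L=K(\sqrt{d})$. By Chebotarev together with weak approximation one can prescribe the splitting of $L/K$ at any prescribed finite set of places; this replaces the role of the prime $5$ in the rational example. I would next fix a finite ``bad'' set $S$ of places of $K$ containing the archimedean ones, those above $2$, and those where $d$ is not a unit, and use weak approximation on $K^\times$ to choose $\alpha,\beta$ satisfying an explicit finite list of square-class and congruence conditions at each $v\in S$. These conditions are tailored so that three things hold simultaneously: the surface $X$ is smooth (a Zariski-open constraint on the coefficients), an explicit $K_v$-point can be written down at every $v\in S$, and each local Hilbert symbol contributing to $\inv_v(\Acal(P_v))$ takes a predetermined value.

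Verification then proceeds place by place. At $v\notin S$ the surface has good reduction, so Hensel's lemma combined with a Lang--Weil point-count gives $X(K_v)\ne\emptyset$, while $\Acal$ evaluates to $0$ locally because either $d$ is a $K_v$-square or $u/v$ is a norm from $K_v(\sqrt{d})$. At $v\in S$ the local points are provided by the construction, and the invariants are pinned down by the chosen congruences. The conditions are arranged so that $\sum_v\inv_v(\Acal(P_v))=\half$ for every adelic point, which contradicts global reciprocity and forces $X(K)=\emptyset$; thus $X$ violates the local-global principle.

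The main obstacle is the simultaneous control at the finite set $S$: for each $v\in S$ one must find $\alpha,\beta$ that produce both a local point \emph{and} the correct Hilbert-symbol contribution, and do so consistently across all $v\in S$ at once. Counting square classes at each $v$ shows that enough freedom is available in principle, but the dyadic places $v\mid 2$ and the subtlety that arises when $\QQ(i)\subset K$ (the case left open in \cite{Wu22c}) demand the most delicate bookkeeping, and the choice of $d$ must be coordinated with the residues of $\alpha,\beta$ from the outset rather than fixed one variable at a time.
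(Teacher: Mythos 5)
The paper does not actually prove this theorem: it simply invokes Viray's thesis \cite[Theorem~5.0.3]{Vi10} as a known result, and the remark that follows merely records Viray's defining equations. Your proposal, by contrast, is an attempt to \emph{reprove} Viray's result from scratch, and while it correctly reconstructs the overall strategy (an intersection of two quadrics extending the Birch--Swinnerton-Dyer example, a quaternion class $(d,u/v)$ giving the Brauer--Manin obstruction, parameters chosen place by place via weak approximation), it stops short of the technical core. You yourself flag ``simultaneous control at the finite set $S$'' and the dyadic places as ``the main obstacle'' --- but that obstacle \emph{is} the proof, and the sketch gives no construction that overcomes it. A referee would read the sketch as an accurate summary of the strategy behind \cite[Theorem~5.0.3]{Vi10} rather than a proof. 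The honest short proof here is a citation, which is what the paper does.

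There is also a concrete discrepancy in the family you propose. Viray's second quadric has the shape $(u - b^2cv)(cu + (1-b^2c^2)v) = x^2 - az^2$, with a free coefficient $c$ multiplying the $u^2$ term; your form $(u-\alpha v)(u-\beta v) = x^2 - dz^2$ normalizes that leading coefficient to $1$, which is genuinely more restrictive (one cannot simply scale $u$ without disturbing the first equation $uv = x^2 - dy^2$). That extra parameter is part of the freedom Viray uses to meet all the local constraints over an arbitrary $K$. If you want your two-parameter subfamily to suffice, you need to argue that point explicitly; as written it is an unjustified restriction. The cleanest repair is either to work with Viray's full three-parameter family or to appeal to \cite[Theorem~5.0.3]{Vi10} directly, as the paper does.
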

	\begin{remark}
		By choosing constants $a, b, c$ in a given field $K,$ Viray \cite[Theorem 5.0.3]{Vi10} constructed a del Pezzo surface of
		degree $4$ over $K,$  violating the local-global principle. The surface is defined by the following two equations: 
		\begin{equation*}
			\begin{cases}
				uv=x^2-ay^2\\ 
				(u-b^2cv)(cu+(1-b^2c^2)v)=x^2-az^2
			\end{cases}
		\end{equation*} 
		in $\PP^4$ with homogeneous coordinates $(x:y:z:u:v).$ Jahnel and Schindler \cite{JS17}  studied a family of degree-$4$ del Pezzo surfaces over $K,$ defined by the following two equations: 
		\begin{equation*}
			\begin{cases}
				x_0x_1=x_2^2-Dx_3^2\\ 
				(x_0+Ax_1)(x_0+Bx_1)=x_2^2-Dx_4^2
			\end{cases}
		\end{equation*} 
		in $\PP^4$ with homogeneous coordinates $(x_0:x_1:x_2:x_3:x_4)$ and triple parameters $(A,B,D)$ in $K.$
		They proved that there are infinite many del Pezzo surfaces in this family violating the local-global principle. Indeed, they are Zariski dense in the coarse moduli scheme given in \cite[Section 5]{HKT13}.
	\end{remark}
	
	\section{Projectively dual varieties of del Pezzo surfaces}\label{section: Projectively dual varieties of del Pezzo surfaces}
	Let $X$ be a del Pezzo surface over $K.$ By Lemma \ref{lemma: del Pezzo surface and intersection of two quadrics}, we assume that $X=Q_0\cap Q_\infty$ is the intersection of two quadrics $Q_0$ and $Q_\infty$ in $\PP^4.$ 
	Let $(\PP^4)^*$ be the dual projective space of $\PP^4,$ cf. \cite{Te05}. Let $Z_X\subset X\times (\PP^4)^*$ be the algebraic subset defined by $$Z_X=\{(x,H)\in X\times (\PP^4)^*|X\cap H \text{ is singular at } x\}.$$
	Let $X^*\subset (\PP^4)^*$ be the projectively dual of $X.$ Then $X^*=Pr_2(Z_X).$ We have the following commutative diagram:
	$$\xymatrix{
		&Z_X \ar_{pr_1}[ld]	\ar^{pr_2}[d] \ar@{^(->}[r] &  X\times (\PP^4)^*\ar^{Pr_2}[d]\\
		X & X^*\ar@{^(->}[r] & 	(\PP^4)^* 
	}$$
	Since $X$ is a smooth surface, the first projection $pr_1\colon Z_X\to X$ makes $Z_X$ into a $\PP^1$ bundle over $X.$ Hence $Z_X$ is smooth, and $\dim Z_X=\dim X+1=3.$ Since $Z_X$ is smooth, projective and geometrically connected, and $X^*=Pr_2(Z_X),$ $X^*$ is a projective and geometrically integral variety. The following proposition states that the second projection $pr_2$ is a birational map between $Z_X$ and $X^*.$
	\begin{prop}\label{prop: pr_2 is birational map}
		Let $X$ be a smooth intersection of two quadrics $Q_0$ and $Q_\infty$ in $\PP^4_K.$ Then the map $pr_2\colon Z_X\to X^*$ is a birational map.
	\end{prop}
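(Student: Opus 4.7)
The plan is to show that $pr_2\colon Z_X \to X^*$ is generically finite of degree $1$. Since $Z_X$ is smooth, projective and irreducible of dimension $3$ and $X^* = pr_2(Z_X)$ is projective and integral, birationality reduces to producing a non-empty open subset of $X^*$ over which $pr_2$ has a single reduced preimage.

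For generic finiteness I would argue $\dim X^* = 3$ as follows. The fiber of $pr_2$ over $H \in X^*$ is set-theoretically the singular locus of the curve $X \cap H \subset H \cong \PP^3$, hence zero-dimensional whenever $X \cap H$ has only isolated singularities. For a fixed smooth point $x_0 \in X$, the $\PP^1$-fiber $pr_1^{-1}(x_0)$ parameterises the pencil of hyperplanes containing $T_{x_0}X$, and for a generic such $H$ the intersection $X \cap H$ has only an isolated nodal singularity at $x_0$; as $x_0$ varies, these pencils sweep out a $3$-dimensional subvariety of $(\PP^4)^*$, which must coincide with $X^*$.

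For the degree being one, I would use the following observation. If $H \in X^*$ were tangent to $X$ at two distinct points $x_1 \neq x_2$ then $H$ would contain both $2$-planes $T_{x_1}X$ and $T_{x_2}X$, forcing $\dim(T_{x_1}X \cap T_{x_2}X) \geq 1$, a codimension-at-least-one condition on pairs $(x_1,x_2) \in X \times X$. A dimension count then shows the resulting bitangent locus is a proper closed subvariety of $X^*$, and outside this locus the fiber of $pr_2$ is a single reduced point, which together with generic finiteness gives birationality.

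The main technical obstacle is making the dimension count rigorous, i.e.\ genuinely ruling out that bitangent hyperplanes cover all of $X^*$. A cleaner alternative is to apply the biduality theorem $X^{**} = X$, which in characteristic zero and for smooth non-defective $X \subset \PP^4$ (with $X^*$ a hypersurface) directly yields birationality of $pr_2$; non-defectivity for a smooth intersection of two quadrics in $\PP^4$ is standard, and can be read off from the classical formula for the degree of the dual of a complete intersection.
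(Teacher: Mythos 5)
Your ``cleaner alternative'' is in fact essentially the paper's route: the paper quotes a lemma of Voisin (\cite[Lemma 2.7]{Vo03}) which, as a consequence of reflexivity in characteristic zero, reduces birationality of $pr_2$ to the single statement $\dim\overline{X}^*=3$, i.e.\ to non-defectivity of $X$. The substance of the paper's proof is then precisely the verification of that fact by an explicit computation: diagonalize the pencil $(Q_0,Q_\infty)$ via Reid's normal form, write a general hyperplane $H$ in an affine chart of $(\PP^4)^*$, form the degree-$4$ polynomial $P(\lambda)=\det(Q'_\lambda)$ of the restricted pencil in $\PP^3$, and observe that $U_4\cap\overline{X}^*$ is cut out by the single equation $\Res(P,P')=0$, hence is a hypersurface. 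Your proposal, by contrast, simply asserts that non-defectivity is ``standard'' and can be ``read off from the classical formula for the degree of the dual of a complete intersection.'' That assertion is correct in characteristic zero, but it is exactly the one nontrivial content of the proposition; as written you have left it as a black box and would need either the paper's explicit computation or a precise citation (e.g.\ the Katz--Kleiman degree formula, or Ein's classification of dual-defective smooth varieties) to close the argument.

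Your primary route is weaker still. The claim that, for generic $H\supset T_{x_0}X$, the curve $X\cap H$ has an isolated nodal singularity at $x_0$ is not obvious and is not argued; it amounts to a nondegeneracy statement about the second fundamental form of $X$ at $x_0$, and a priori this could fail along a positive-dimensional locus in $Z_X$, in which case the dimension count for $X^*$ would not go through. Similarly, the bitangent-hyperplane estimate for degree one is only sketched, and even if completed it would still leave the reducedness of the generic scheme-theoretic fiber to be checked. Once $\dim X^*=3$ is established, reflexivity hands you generic degree one for free (this is what both the paper and your own alternative use), so the bitangent argument is best dropped in favor of that.
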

	\begin{proof}
		Since this is a geometric argument, it will be suffice to check that $pr_2\colon \overline{Z_X}\to \overline{X}^*$ is a birational map. By \cite[Lemma 2.7]{Vo03}, we only need to show that $\dim \overline{X}^*=3.$
		By \cite[Proposition 2.1]{Re72}, we assume that $Q_0=\sum\limits_{i=0}\limits^4 x_i^2$ and $Q_\infty=\sum\limits_{i=0}\limits^4 c_ix_i^2$ with $c_i\in \overline{K}$ and $c_i\neq c_j$ for all $0\leq i\neq j\leq 4.$ Let $H\subset \PP^4$ be a hyperplane, also a point in $(\PP^4)^*$ with coordinates $(y_0:\cdots:y_4).$ For some integer $i\in [0,4],$ let $U_i\subset (\PP^4)^*$ be the affine open variety defined by $y_i\neq 0.$ Since $(\PP^4)^*$ is covered by $U_i,$ without loss of generality, we may assume $H\in U_4(\overline{K}),$ and $H$ is defined by $x_4+\sum\limits_{i=0}\limits^3 x_iy_i=0$ and $c_4=0.$ Hence $H\cap \overline{X}$ is isomorphic to the intersection of $Q_0'=\sum\limits_{i=0}\limits^3 x_i^2+(\sum\limits_{i=0}\limits^3 x_iy_i)^2$ and $Q_\infty'=\sum\limits_{i=0}\limits^3 c_ix_i^2$ in $\PP^3$ with coordinates $(x_0:\cdots:x_3).$ Consider the pencil of quadrics: $Q_\lambda'=Q_0'+\lambda Q_\infty',~ \lambda\in \overline{K}.$ Let $P(\lambda)=\det (Q_\lambda').$ Then $P(\lambda)$ is a polynomial in $\lambda$ of degree $4,$ and the leading coefficient is $\prod\limits_{i=0}\limits^3c_i\neq 0.$ By the definition of $\overline{Z_X},$ $H\in \overline{X}^*$ if and only if $H\cap \overline{X}$ is singular.  By \cite[Proposition 2.1]{Re72}, the intersection of $Q_0'$ and $Q_\infty'$ is nonsingular if and only if all roots of $P(\lambda)$ are distinct. Let $P'(\lambda)$ be the derivative of $P(\lambda).$ Then $H\cap \overline{X}$ is singular if and only if the polynomials $P(\lambda)$ and $P'(\lambda)$ have a root in common. Let $\Res(P(\lambda),P'(\lambda))$ be the resultant of $P(\lambda), P'(\lambda).$	By \cite[Chapter IV,  Corollary 8.4]{La02}, the polynomials $P(\lambda)$ and $P'(\lambda)$ have a root in common if and only if $\Res(P(\lambda),P'(\lambda))=0.$
		Then $U_4\cap \overline{X}^*$ is the zero locus of $\Res(P(\lambda),P'(\lambda))$ in $U_4.$ Hence $\dim \overline{X}^*=3.$
	\end{proof}
	With this proposition, we have the following corollary.
	\begin{cor}
		Let $X$ be a smooth intersection of two quadrics $Q_0$ and $Q_\infty$ in $\PP^4_K.$ There exists a Zariski open subset $U\subset X^*$ such that for any hyperplane $H\in U(\overline{K}),$ the intersection $H\cap \overline{X}$ has exactly one singular point, and this singular point is an ordinary double point on $H\cap \overline{X}.$
	\end{cor}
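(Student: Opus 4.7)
The plan is to combine the birationality established in Proposition \ref{prop: pr_2 is birational map} with a generic analysis of the pencil of quadrics used in its proof. Since $pr_2\colon Z_X\to X^*$ is a birational morphism between projective varieties with $Z_X$ smooth, there exists a Zariski open dense subset $U_1\subset X^*$ over which $pr_2$ restricts to an isomorphism. For any $H\in U_1(\overline{K})$, the fibre $pr_2^{-1}(H)\subset Z_X$ is a single reduced point, so by the very definition of $Z_X$ the intersection $H\cap\overline{X}$ has exactly one singular point.

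Next, I would shrink $U_1$ further to control the type of this singularity. Using the notation from the proof of Proposition \ref{prop: pr_2 is birational map}, for $H$ in the affine chart $U_4\subset(\PP^4)^*$ the section $H\cap\overline{X}$ is projectively equivalent to a base locus $C=Q_0'\cap Q_\infty'\subset\PP^3$ of a pencil of quadrics, and $H\in\overline{X}^*$ exactly when $P(\lambda)=\det(Q_\lambda')$ has a multiple root. The conditions ``$P(\lambda)$ has a triple root'' and ``$P(\lambda)$ has two distinct double roots'' each cut out a proper closed subvariety of the discriminant hypersurface $\overline{X}^*$, expressible by the vanishing of further subresultants of $P$ and its derivative. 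Let $U_2\subset U_1$ be the complement of these loci; then $U_2$ is Zariski open and dense in $X^*$, and for every $H\in U_2(\overline{K})$ the polynomial $P(\lambda)$ has exactly one multiple root $\lambda_0$, which is a simple double root, the remaining two roots being simple.

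Finally, for such $H$ the quadric $Q_{\lambda_0}'$ has rank exactly $3$, hence is a cone with a unique vertex $p$, which must be the unique singular point of $C$ detected in the first step. To verify that $p$ is an ordinary double point I would make a local computation: after a linear change of coordinates take $p=(1:0:0:0)$, so that $Q_{\lambda_0}'=q(x_1,x_2,x_3)$ is a rank-$3$ ternary quadratic form, while $Q_0'$ is smooth at $p$ with tangent plane $T=\{L(x_1,x_2,x_3)=0\}$ for some nonzero linear form $L$. Eliminating via $Q_0'$ shows that the tangent cone to $C$ at $p$ is cut out in $T$ by the restriction $q|_T$; provided $L$ is not tangent to the smooth conic $\{q=0\}\subset\PP^2$, this restriction is a nondegenerate binary quadratic form and splits as two distinct lines, yielding the ordinary double point property. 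Setting $U=U_2$ then concludes the proof. The main obstacle is this last implication: one must check that excluding triple roots and repeated double roots of $P(\lambda)$ is precisely what forces $L$ to be nontangent to $\{q=0\}$. This follows from the classical Segre-symbol dictionary for pencils of quadrics, which matches symbol $[2,1,1]$ with a single node on $C$ and matches the symbols $[3,1]$ and $[2,2]$ with exactly the triple-root and two-double-root loci we removed.
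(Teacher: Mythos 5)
Your proposal coincides with the paper's proof for the first half: both use Proposition~\ref{prop: pr_2 is birational map} to pass to an open $U_1\subset X^*$ over which $pr_2$ is an isomorphism, so that $H\cap\overline{X}$ has exactly one singular point. For the second half the paper simply cites \cite[Exp.~XVII, 3.7]{DK73} (Lefschetz pencil theory in SGA~7), whereas you instead analyze the pencil of quadrics cutting out $H\cap\overline{X}$ and invoke the Segre-symbol classification. This is a legitimate, more explicit alternative route, but as written it has two gaps.

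First, the assertion that $Q_{\lambda_0}'$ has rank exactly $3$ does not follow merely from $\lambda_0$ being the unique double root of $P(\lambda)$: the Segre symbol $[(1,1),1,1]$ also has a single double root of the determinant but the corresponding quadric has rank $2$ and the base curve is two conics meeting in two nodes. You need to observe that this case is already excluded by $H\in U_1$, since having two singular points on $H\cap\overline{X}$ would make $pr_2^{-1}(H)$ consist of two points. Once that is said, rank $3$ is forced. Second, your concluding appeal to ``the classical Segre-symbol dictionary'' is circular relative to the local tangent-cone computation you set up: if you are willing to cite that the symbol $[2,1,1]$ yields a quartic with a single ordinary node, then the local computation (and its unresolved tangency condition on $L$ versus $\{q=0\}$) is unnecessary and should simply be dropped; if instead you want the local computation to stand on its own, you must verify directly from the rank-$3$, single-Jordan-block structure that the tangent plane of $Q_0'$ at the vertex is not tangent to the residual conic, rather than deferring this to the dictionary whose conclusion you are trying to reprove. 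Either fix makes the argument complete, but as it stands the final step does not close.
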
\label{corollary: pr_2 is birational map}
	\begin{proof}
		By Proposition \ref{prop: pr_2 is birational map}, there exists a nonempty Zariski subset $U'\subset \overline{X}^*$ such that $pr_2^{-1}(U')\to U'$ is an isomorphism. By the definition of $\overline{Z_X},$ the intersection $H\cap \overline{X}$ has exactly one singular point. By \cite[Chapter XVII 3.7]{DK73}, this singular point is an ordinary double point on $H\cap \overline{X}.$
	\end{proof}
	
	The following lemma will be used to analysis intersections of del Pezzo surfaces with hyperplanes in $\PP^4.$
	
	\begin{lemma}\label{lemma: intersection of H and X is integral}
		Let $Q_0$ and $Q_\infty$ be two quadrics in $\PP^3_{\overline{K}}.$ Let $Q_\lambda=Q_0+\lambda Q_\infty,~ \lambda\in \overline{K}.$ Let $P(\lambda)=\det (Q_\lambda),$ and let $P'(\lambda)$ be the derivative of $P(\lambda).$ Assume that $P(\lambda)$ is of degree $4,$ and the greatest common factor of polynomials $P(\lambda)$ and $P'(\lambda)$ is of degree $1.$ Then the intersection $Q_0\cap Q_\infty$ is integral or has two singular points.
	\end{lemma}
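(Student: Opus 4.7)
Working over $\overline{K}$, the hypothesis $\gcd(P,P')$ of degree $1$ translates into the following root pattern: $P(\lambda)$ has a unique multiple root $\mu_0$, of multiplicity exactly $2$, together with two simple roots $\mu_1,\mu_2$. The plan is to classify the pencil $\{Q_\lambda\}$ over $\overline{K}$ via the Kronecker--Weierstrass normal form of the pair of symmetric matrices, and then inspect $X=Q_0\cap Q_\infty$ case by case. Since $\deg P=4$ the quadric $Q_\infty$ is nondegenerate, and after replacing $Q_0$ by some $Q_0+cQ_\infty$ (which does not change $X$) we may also assume $Q_0$ nondegenerate. Given the root pattern, the structure at $\mu_0$ is either diagonalizable with two size-$1$ Jordan blocks, corresponding to Segre symbol $[(1,1),1,1]$ with $Q_{\mu_0}$ of rank $2$, or one size-$2$ Jordan block, corresponding to Segre symbol $[2,1,1]$ with $Q_{\mu_0}$ of rank $3$.

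In the rank-$2$ case, $Q_{\mu_0}$ splits over $\overline{K}$ as a product $H_+\cdot H_-$ of two distinct hyperplanes. Writing $X=Q_\infty\cap Q_{\mu_0}=(Q_\infty\cap H_+)\cup(Q_\infty\cap H_-)$ exhibits $X$ as the union of two plane conics cut out on the smooth quadric $Q_\infty$, meeting along $H_+\cap H_-\cap Q_\infty$. The line $H_+\cap H_-$ is the singular locus of $Q_{\mu_0}$, and it cuts the smooth quadric $Q_\infty$ in exactly two points, producing the two singular points of $X$.

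In the rank-$3$ case, $Q_{\mu_0}$ is a quadric cone with unique vertex $v$. From the explicit normal form one reads off that $v$ lies on both $Q_0$ and $Q_\infty$, forced by the off-diagonal shape of the size-$2$ Jordan block and not merely by $Q_{\mu_0}(v)=0$, so $v\in X$. Any singular point of $X$ is necessarily a singular point of some member of the pencil; the vertices of the cones $Q_{\mu_1},Q_{\mu_2}$ do not lie on $X$ in the normal form (using that $\mu_0,\mu_1,\mu_2$ are distinct and $Q_0$ is nondegenerate), and $Q_{\mu_0}$ contributes only $v$. Eliminating one coordinate via $Q_\infty$ locally at $v$ and substituting into $Q_{\mu_0}$ produces a nondegenerate quadratic leading form, so $v$ is an ordinary double point. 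Hence $X$ is an irreducible curve with a single node, and in particular integral.

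The main obstacle is the rank-$3$ case: one must produce the Kronecker--Weierstrass normal form, verify that the cone vertex actually lies on $X$ (a consequence of the shape of the size-$2$ block rather than an accident), and rule out singular contributions from the cones at the simple roots. Once the normal-form reduction is in hand, each of these amounts to a short explicit computation.
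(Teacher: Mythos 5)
Your proof is correct and follows essentially the same route as the paper: both split on whether the quadric at the double root $\mu_0$ has rank $2$ or rank $3$, reduce the pencil to a normal form (you via Kronecker--Weierstrass and Segre symbols, the paper by an explicit basis construction using $\Ker\phi_0$ and $\Ker\mathcal{A}^2$), and read off in the rank-$2$ case a reducible curve with two nodes and in the rank-$3$ case an irreducible curve with a single node at the cone vertex. The only real difference is presentational: the paper carries out the normal-form reduction by hand rather than citing it, which also automatically dispatches the small verifications you flag at the end (that the singular line meets $Q_\infty$ in exactly two points, that the vertex lies on $X$, and that the other cone vertices do not).
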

	\begin{proof}
		Let $V$ be a vector space of dimension $4$ over $\overline{K}.$ By $\PP^4\cong \PP(V),$ we view $Q_0,~Q_\infty$ as two quadrics in $\PP(V).$ Since there exists a $1$-$1$ correspondence between quadratic forms and symmetric bilinear forms, we denote both them by the same letter $\phi.$ Let $\phi_0,~\phi_\infty$ be two quadratic forms, which determine quadrics $Q_0$ and $Q_\infty$ respectively.
		Since $P(\lambda)$ is of degree $4,$ the quadratic form $\phi_\infty$ is nondegenerate. Let $\phi_\lambda=\phi_0+\lambda\phi_\infty.$ Since the greatest common factor of polynomials $P(\lambda)$ and $P'(\lambda)$ is of degree $1,$ without loss of generality, we assume $(P(\lambda),P'(\lambda))=\lambda.$ Let $\lambda_1,\lambda_2$ be the other two different nonzero roots of $P(\lambda),$ and $P(\lambda)=c\lambda^2(\lambda-\lambda_1)(\lambda-\lambda_2)$ for some nonzero constant $c\in \overline{K}^\times.$ For $i\in {1,2},$ let $e_i\in \Ker \phi_{\lambda_i}$ be a nonzero vector in $V.$ Then $e_1$ and $e_2$ are orthogonal for $\phi_{\lambda_1}$ and $\phi_{\lambda_2},$ and hence for all $\phi_\lambda.$  	
		Since $P(\lambda)$ has the factor $\lambda^2,$ we have $\Ker \phi_0\neq 0.$ There are the following two cases.
		\begin{enumerate}{
				\item  	Suppose that $\dim \Ker \phi_0=2.$ By the same argument as in the previous sentence, the vector spaces $\Ker \phi_0,$ $\overline{K}e_1$ and $\overline{K}e_2$ are orthogonal for all $\phi_\lambda.$ Hence $V=\Ker \phi_0\oplus \overline{K}e_1\oplus \overline{K}e_2.$ By normalizing $e_1,~e_2$ with respect to $\phi_\infty,$ we carefully choose two linearly independent vectors $e_0^1,e_0^2\in \Ker \phi_0$ so that $\phi_\infty(x_0e_0^1+x_1e_0^2+x_2e_1+x_3e_2)=\sum\limits_{i=0}^3x_i^2.$ And $\phi_0(x_0e_0^1+x_1e_0^2+x_2e_1+x_3e_2)=-\lambda_1x_2^2-\lambda_2x_3^2.$ Hence $Q_0\cap Q_\infty$ has two irreducible component, and has exactly two singular points $(x_0:x_1:x_2:x_3)=(\pm \sqrt{-1}:1:0:0).$
				\item Suppose that $\dim \Ker \phi_0=1.$ Since the quadratic form $\phi_\infty$ is nondegenerate, we choose a basis $\xi_0,\cdots,\xi_3$ such that $\phi_\infty(\sum\limits_{i=0}^3 x_i\xi_i)=\sum\limits_{i=0}^3x_i^2.$ Then $\phi_0(\sum\limits_{i=0}^3 x_i\xi_i)=\sum\limits_{i=0}^3\sum\limits_{j=0}^3 a_{ij}x_ix_j$ for some symmetric matrix $A=(a_{ij})_{0\leq i,j\leq 3}$ with coefficients in $\overline{K}.$ The matrix $A$ gives a linear map $V\to V,$ denoted by $\Acal.$  Let $P_\Acal(\lambda)$ be the characteristic polynomial of $\Acal.$ Then $P_\Acal(\lambda)=\lambda^2(\lambda+\lambda_1)(\lambda+\lambda_2),$ $\Ker\phi_0=\Ker\Acal,$ and $-\lambda_1$ and $-\lambda_2$ are eigenvalues of $\Acal$ belonging to the eigenvectors $e_1$ and $e_2$ respectively. Since $P_\Acal(\lambda)$ has the factor $\lambda^2,$ by Jordan normal form theorem \cite[Theorem 6.2]{La04}, $\dim \Ker\Acal^2=2.$ The symmetric matrix $A^2$ gives a quadratic form $\phi_{A^2}.$ By the same argument applies to quadratic forms $\phi_{A^2}$ and $\phi_\infty,$ the vector spaces $\Ker\Acal^2,$ $\overline{K}e_1$ and $\overline{K}e_2$ are orthogonal for $\phi_{A^2}$ and $\phi_\infty.$ Hence $V=\Ker\Acal^2\oplus \overline{K}e_1\oplus \overline{K}e_2.$ Let $e_0^1\in \Ker\Acal$ be a nonzero vector. Since $\Ker\Acal\subsetneq \Ker\Acal^2,$ there exists a vector $e_0'$ such that $e_0^1=\Acal (e_0').$ Then $\phi_\infty(e_0^1)=\phi_\infty(\Acal (e_0'))=\phi_\infty(e_0',\Acal^2 (e_0'))=0.$ Since $\phi_\infty$ is nondegenerate, we can choose a nonzero vector $e_0^2\in \Ker\Acal^2$ such that $\phi_\infty(e_0^1,e_0^2)=1$ and $\phi_\infty(e_0^2,e_0^2)=0.$ By normalizing $e_1,~e_2$ with respect to $\phi_\infty,$ we have 
				$\phi_\infty(x_0e_0^1+x_1e_0^2+x_2e_1+x_3e_2)=x_0x_1+x_2^2+x_3^2,$ and $\phi_0(x_0e_0^1+x_1e_0^2+x_2e_1+x_3e_2)=c_0x_1^2-\lambda_1x_2^2-\lambda_2x_3^2$ for some nonzero constant $c_0\in \overline{K}^\times.$ Hence $Q_0\cap Q_\infty$ is integral, and has exactly one singular point $(x_0:x_1:x_2:x_3)=(1:0:0:0).$ 
		}\end{enumerate}
		In summary, the intersection $Q_0\cap Q_\infty$ is integral or has two singular points.
	\end{proof}
	\begin{remark}
		One can check in both case that the singular points have ordinary singularity.
	\end{remark}
	
	The following proposition states that intersections of $X$ with hyperplanes in $X^*$ are almost geometrically integral.
	\begin{prop}\label{proposition: geometrically integral intersection}
		Let $X$ be a smooth intersection of two quadrics $Q_0$ and $Q_\infty$ in $\PP^4_K.$ There exists a nonempty Zariski open subset $U\subset X^*$ such that for any hyperplane $H\in U(\overline{K}),$ the intersection $H\cap \overline{X}$ is integral.
	\end{prop}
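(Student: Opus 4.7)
The plan is to combine Corollary \ref{corollary: pr_2 is birational map} with Lemma \ref{lemma: intersection of H and X is integral}. Let $U_1 \subset X^*$ be the nonempty open subset from the Corollary, so that for each $H \in U_1(\overline{K})$ the intersection $H \cap \overline{X}$ has exactly one singular point, and that singular point is an ordinary double point. I would like to find a nonempty open $U \subset U_1$ on which the hypotheses of Lemma \ref{lemma: intersection of H and X is integral} are also satisfied; the Lemma's dichotomy ``integral or two singular points'' will then force integrality of $H \cap \overline{X}$, since $U \subset U_1$ excludes the second alternative.

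I would normalize coordinates on $\PP^4_{\overline{K}}$ as in the proof of Proposition \ref{prop: pr_2 is birational map}, so that $Q_\infty = \sum_{i=0}^{4} c_i x_i^2$ with pairwise distinct $c_i$. For a hyperplane $H$, set $P_H(\lambda) = \det(Q_0|_H + \lambda Q_\infty|_H)$. The first hypothesis $\deg P_H = 4$ is equivalent to $Q_\infty|_H$ being nondegenerate, which is an open and nonempty condition on $(\PP^4)^*$, failing precisely on the projective dual of $Q_\infty$. Let $U_2 \subset (\PP^4)^*$ be this open set; since $X^*$ is a geometrically integral hypersurface in $(\PP^4)^*$ distinct from the dual of $Q_\infty$, the intersection $U_2 \cap X^*$ is nonempty and open in $X^*$.

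The remaining hypothesis, that the greatest common factor of $P_H$ and $P_H'$ has degree exactly $1$, fails precisely when $P_H$ has either a root of multiplicity $\ge 3$ or two distinct double roots. This defines a closed subvariety strictly smaller than the discriminantal hypersurface $X^*$: since $X^*$ is irreducible of dimension $3$ and each of these two stratifying conditions is of strictly higher codimension in the parameter space than ``$P_H$ has a double root'', the resulting open subset $U_3 \cap X^*$ is nonempty. A more concrete justification comes from Corollary \ref{corollary: pr_2 is birational map} itself: for $H \in U_1 \cap U_2$, having either two distinct double roots or a triple root in $P_H$ would, via the rank analysis behind the proof of Lemma \ref{lemma: intersection of H and X is integral}, produce either an extra singular point on $H \cap \overline{X}$ or a singularity worse than $A_1$, both contradicting $H \in U_1$.

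Set $U := U_1 \cap U_2 \cap U_3$, which is a nonempty open subset of $X^*$ by irreducibility of $X^*$. For every $H \in U(\overline{K})$, Lemma \ref{lemma: intersection of H and X is integral} applies and gives that $H \cap \overline{X}$ is integral or has two singular points; the second case is ruled out by $H \in U_1$, so $H \cap \overline{X}$ is integral, as desired. The main technical point I anticipate is the nonemptiness of $U_3 \cap X^*$, which can be handled either by the irreducibility and dimension argument above, or, alternatively, by exhibiting one explicit hyperplane $H \in X^*$ with $P_H$ having a unique pair of coincident roots using the coordinates produced in the proof of Proposition \ref{prop: pr_2 is birational map}.
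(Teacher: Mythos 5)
Your proof follows essentially the same route as the paper: both reduce to the open condition that $P_H$ and $P_H'$ have a gcd of degree exactly one, invoke Lemma~\ref{lemma: intersection of H and X is integral}'s dichotomy (integral or two singular points), and rule out the second alternative using the unique singular point guaranteed by Proposition~\ref{prop: pr_2 is birational map}/Corollary~\ref{corollary: pr_2 is birational map}. The only cosmetic difference is that you handle the condition $\deg P_H = 4$ as a separate open set $U_2$ and spend more words justifying nonemptiness, whereas the paper bakes the degree condition into the coordinate normalization and takes the nonemptiness of the gcd locus as implicit.
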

	\begin{proof}
		Since $X^*$ is geometrically integral, by Galois decent, it will be suffice to find a nonempty Zariski open subset $U\subset \overline{X}^*$ such that for any hyperplane $H\in U(\overline{K}),$ the intersection $H\cap \overline{X}$ is integral.
		By the same argument as in the proof of Proposition \ref{prop: pr_2 is birational map}, we assume that $Q_0=\sum\limits_{i=0}\limits^4 x_i^2$ and $Q_\infty=\sum\limits_{i=0}\limits^3 c_ix_i^2$ with nonzero coefficients $c_i\in \overline{K}$ and $c_i\neq c_j$ for all $0\leq i\neq j\leq 3.$
		By Proposition \ref{prop: pr_2 is birational map}, there exists a nonempty Zariski subset $U'\subset \overline{X}^*$ such that $pr_2^{-1}(U')\to U'$ is an isomorphism. 
		Let $(y_0:\cdots:y_4)$ be the coordinates of $(\PP^4)^*,$ and let $U_4\subset (\PP^4)^*$ be the affine open variety defined by $y_4\neq 0.$ 
		Take a hyperplane $\tilde{H}\in U_4(\overline{K}),$ and $\tilde{H}$ is defined by $x_4+\sum\limits_{i=0}\limits^3 x_iy_i=0.$  Hence $\tilde{H}\cap \overline{X}$ is isomorphic to the intersection of $Q_0'=\sum\limits_{i=0}\limits^3 x_i^2+(\sum\limits_{i=0}\limits^3 x_iy_i)^2$ and $Q_\infty'=\sum\limits_{i=0}\limits^3 c_ix_i^2$ in $\PP^3$ with coordinates $(x_0:\cdots:x_3).$ Let $Q_\lambda'=Q_0'+\lambda Q_\infty',~ \lambda\in \overline{K}.$ Let $P(\lambda)=\det (Q_\lambda'),$ and let $P'(\lambda)$ be the derivative of $P(\lambda).$ Let $\Res(P(\lambda),P'(\lambda))$ be the resultant of $P(\lambda), P'(\lambda).$
		From the proof of Proposition \ref{prop: pr_2 is birational map}, $U_4\cap \overline{X}^*$ is the zero locus of $\Res(P(\lambda),P'(\lambda))$ in $U_4.$ The condition that the greatest common factor of polynomials $P(\lambda)$ and $P'(\lambda)$ is of degree $1,$ gives an open condition for the zero locus of $\Res(P(\lambda),P'(\lambda))$ in $U_4.$ Let $U''\subset U_4\cap \overline{X}^*$ be a Zariski open subset such that for any $\tilde{H}\in U''(\overline{K}),$ the greatest common factor of polynomials $P(\lambda)$ and $P'(\lambda)$ is of degree $1$. Let $U=U'\cap U''.$ Take an arbitrary $H\in U(\overline{K}).$ Since $pr_2$ is an isomorphism over $U,$ $H\cap \overline{X}$ has exactly one singular point. Because of $H\in U''(K),$ the greatest common factor of polynomials $P(\lambda)$ and $P'(\lambda)$ is of degree $1.$ By Lemma \ref{lemma: intersection of H and X is integral}, the intersection $H\cap \overline{X}$ is integral.
	\end{proof}
	
	Next, we will choose a pencil in a del Pezzo surface $X$ such that for every $H$ in this pencil, the intersection $H\cap X$ is geometrically integral.
	\begin{prop}\label{proposition: existence of a pencil}
		Let $X$ be a smooth intersection of two quadrics $Q_0$ and $Q_\infty$ in $\PP^4_K.$ There exists a line $D$ in $(\PP^4)^*$ such that for every $H$ in this pencil, the intersection $H\cap X$ is geometrically integral.
	\end{prop}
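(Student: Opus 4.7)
The plan is to identify the ``bad'' locus of hyperplanes $H \in (\PP^4)^*$ for which $H \cap \overline{X}$ fails to be integral, show it has codimension at least $2$ in $(\PP^4)^*$, and then produce a $K$-rational line in $(\PP^4)^*$ that avoids it by a dimension count on the Grassmannian.

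First, I would split the analysis according to whether $H$ lies on the dual variety $X^*$. If $H \notin X^*(\overline{K})$, then $H \cap \overline{X}$ is a smooth complete intersection in $H \cong \PP^3$ of two quadrics and a hyperplane, hence a smooth connected curve (complete intersections of positive dimension in projective space are connected), and in particular geometrically integral. If $H \in X^*(\overline{K})$, then by Proposition \ref{proposition: geometrically integral intersection} there is a nonempty Zariski open $U \subset X^*$ over which $H \cap \overline{X}$ is integral. Let $B := X^* \setminus U$. Since $X^*$ is geometrically integral of dimension $3$ by Proposition \ref{prop: pr_2 is birational map}, the closed subset $B \subset X^* \subset (\PP^4)^*$ has dimension at most $2$. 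Thus every hyperplane $H$ with $H \cap \overline{X}$ non-integral lies in $B$.

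Next I would do a standard incidence-variety count in the Grassmannian $\mathrm{Gr}$ of lines in $(\PP^4)^*$, which is smooth, $K$-rational of dimension $6$. Consider the incidence variety
$$
I = \{(H,D) \in B \times \mathrm{Gr} : H \in D\}.
$$
The first projection $I \to B$ is surjective with fibers the Schubert variety of lines through a fixed point, each of dimension $3$. Hence $\dim I \leq \dim B + 3 \leq 5$. Since $B$ is projective, the image of the second projection $I \to \mathrm{Gr}$ is closed and of dimension at most $5$, strictly less than $6 = \dim \mathrm{Gr}$. Therefore the set of lines in $(\PP^4)^*$ meeting $B$ is a proper Zariski closed subset of $\mathrm{Gr}$, and its complement $W \subset \mathrm{Gr}$ is a nonempty Zariski open subset.

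Finally, since $\mathrm{Gr}$ is $K$-rational and $K$ is an infinite field, $K$-rational points are Zariski dense in $\mathrm{Gr}$, so $W(K) \neq \emptyset$. Any $K$-rational line $D \in W(K)$ has the property that $D(\overline{K}) \cap B = \emptyset$, so for every $H \in D$ (over any extension of $K$) the intersection $H \cap \overline{X}$ is integral, i.e., $H \cap X$ is geometrically integral. The only mild subtlety I expect is keeping the two cases $H \notin X^*$ and $H \in X^*$ separate and correctly bounding $\dim B$; once this is set up, the pencil is produced by a direct genericity argument on the Grassmannian.
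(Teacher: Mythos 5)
Your argument is correct, and it rests on exactly the same key fact as the paper's proof: the ``bad'' locus $B = X^* \setminus U$ has codimension at least two in $(\PP^4)^*$ (since $X^*$ is a hypersurface by Proposition~\ref{prop: pr_2 is birational map} and $U$ is dense open in $X^*$ by Proposition~\ref{proposition: geometrically integral intersection}), together with the observation that for $H$ off $X^*$ the slice $H\cap\overline{X}$ is a smooth connected complete intersection, hence integral. Where you differ is in how you extract a $K$-rational line missing $B$: you run a standard incidence-variety computation in $\mathrm{Gr}(2,5)$, showing the locus of lines meeting $B$ has dimension at most $\dim B + 3 \le 5 < 6$, and then use that the Grassmannian is $K$-rational to find a $K$-point in the complement. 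The paper instead fixes a $K$-point $H_0 \in (\PP^4)^*\setminus X^*$ first, observes that the cone over $B$ with apex $H_0$ has dimension at most $\dim B + 1 \le 3 < 4$, and picks a second $K$-point $H_\infty$ off this cone, so the line $\overline{H_0H_\infty}$ avoids $B$. The cone argument is slightly more economical (it never leaves $(\PP^4)^*$, a projective space with obviously dense $K$-points), while your Grassmannian argument is a bit more systematic and would generalize cleanly to higher-dimensional linear systems. Both are standard genericity arguments and both are correct; no gap.
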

	\begin{proof}
		Since $\dim X^*=3,$ we take a hyperplane $H_0\in (\PP^4)^*\backslash X^*.$ Then $H_0\cap X$ is smooth. By \cite[Chapter III, Corollary 7.9]{Ha97}, $H_0\cap X$ geometrically connected. (The existence of $H_0$ is also from Bertini's theorem \cite[Chapter II, Theorem 8.18]{Ha97}.)
		By Proposition \ref{proposition: geometrically integral intersection}, there exists a Zariski open subset $U\subset X^*$ such that for any hyperplane $H\in U(\overline{K}),$ the intersection $H\cap \overline{X}$ is integral. Since $\dim X^*=3,$ $\dim (X^*\backslash U)$ is at most $2.$ Then the cone over $X^*\backslash U$ with axis $H_0$ has dimension at $3.$ So we can choose a hyperplane $H_\infty$ not in this cone. Let $D\subset (\PP^4)^*$ be the line jointing $H_0$ and $H_\infty.$ Hence $D\cap X^*\subset U.$ By the same argument  as in the previous sentence and Proposition \ref{proposition: geometrically integral intersection}, for all $H\in D(\overline{K}),$ the intersection $H\cap \overline{X}$ is integral.
	\end{proof} 
	\begin{remark}
		The pencil that we chosen in the proof of this proposition, is a so-called Lefschetz pencil in $X.$
	\end{remark}
	
	With the help of Proposition \ref{proposition: existence of a pencil}, we can find a fibration having the following properties.
	\begin{prop}\label{proposition: existence of genus 1 fiberation and every fibre is integral}
		Let $X$ be a del Pezzo surface $X$ over $K.$ Then there exist a smooth, projective and geometrically integral surface $\tilde{X},$ a birational morphism $\tilde{X}\to X,$ and a dominant 
		morphism $\pi\colon \tilde{X}\to \PP^1$ such that every geometric fiber is integral, and the generic fibre of $\pi$ is a curve of genus $1.$
	\end{prop}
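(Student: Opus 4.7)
The plan is to take the pencil $D \subset (\PP^4)^*$ produced by Proposition \ref{proposition: existence of a pencil}, restrict it to $X$, and resolve the indeterminacy of the induced rational map $X \dashrightarrow D \cong \PP^1$ by blowing up its base locus.

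Using Lemma \ref{lemma: del Pezzo surface and intersection of two quadrics} I first embed $X$ as a smooth intersection $Q_0 \cap Q_\infty$ of two quadrics in $\PP^4_K$, and apply Proposition \ref{proposition: existence of a pencil} to obtain a line $D \subset (\PP^4)^*$ such that $H \cap \overline{X}$ is integral for every $H \in D(\overline{K})$. Fixing two hyperplanes $H_0, H_\infty$ spanning $D$, the axis $L := H_0 \cap H_\infty$ is a $\PP^2$ and the base locus $B := L \cap X$ is $0$-dimensional, since otherwise some member $H \cap X$ would contain an entire curve common to all members, contradicting integrality of the generic member. I then take $\tilde{X} \to X$ to be the blow-up of $X$ along $B$; because $X$ is smooth, projective, and geometrically integral, so is $\tilde{X}$, and the blow-up is a birational morphism. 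The pencil extends to a dominant morphism $\pi \colon \tilde{X} \to \PP^1 \cong D$.

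The substantive step is to verify that every geometric fibre of $\pi$ is integral. I argue that the scheme-theoretic fibre $\pi^{-1}(\lambda)$ coincides with the strict transform of $H_\lambda \cap X$: a local calculation in suitable coordinates around each base point $p \in B$ shows that as long as $p$ is a smooth point on every member of the pencil, the exceptional divisor over $p$ is not contracted by $\pi$ but maps isomorphically to $\PP^1$. The required smoothness at base points can be ensured by refining the choice of $D$ to avoid hyperplanes whose intersection with $X$ is singular at any of the finitely many base points; this refinement is compatible with the conclusion of Proposition \ref{proposition: existence of a pencil} because each condition cuts out a proper closed subset of the family of lines in $(\PP^4)^*$. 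Since strict transforms of geometrically integral curves on a smooth surface are themselves geometrically integral, the integrality of $H_\lambda \cap \overline{X}$ supplied by Proposition \ref{proposition: existence of a pencil} transfers directly to the geometric fibres of $\pi$.

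Finally, the generic fibre is smooth by Bertini applied to the anticanonical linear system (which is very ample, hence base-point-free, by \cite[Proposition III 3.4.3]{Ko99}), and it has arithmetic genus $1$ by the adjunction formula: since $H \sim -K_X$ in the anticanonical embedding, $(K_X + H)|_H = 0$ on $H \cap X$. The main technical obstacle is simultaneously maintaining the two open genericity conditions on $D$ (geometric integrality of every member, and smoothness at the base points, which themselves vary with $D$); this requires a short incidence-variety dimension count inside the Grassmannian of lines in $(\PP^4)^*$, but both conditions are open and generically satisfied, so their intersection is nonempty.
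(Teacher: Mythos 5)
Your proof follows essentially the same route as the paper: embed $X$ as an intersection of two quadrics, take the pencil $D$ from Proposition \ref{proposition: existence of a pencil}, blow up the (reduced, degree-$4$) base locus, and observe that the generic fibre is a smooth intersection of two quadrics in $\PP^3$, hence a genus-$1$ curve. The one place you go further than the paper is the verification that each geometric fibre of $\pi$ is actually the strict transform of $H_\lambda\cap\overline{X}$ rather than that transform plus an exceptional component: the paper simply asserts the fibre equals $H_t\cap\overline X$ and implicitly relies on the Lefschetz-pencil property recorded only in a remark, whereas you correctly flag that base points must be smooth on every member and note that this can be arranged by a further generic choice of $D$; this is a genuine point the paper glosses over and your treatment of it is a modest improvement, even though you only sketch the incidence-variety dimension count needed to make the refinement fully rigorous.
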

	\begin{proof}
		By Lemma \ref{lemma: del Pezzo surface and intersection of two quadrics}, we embed $X$ in $\PP^4$ as a smooth intersection of two quadrics.
		By Proposition \ref{proposition: existence of a pencil}, let $\PP^1\subset (\PP^4)^*$ be a line such that for every $H\in \PP^1(\overline{K})$, the intersection $H\cap X$ is geometrically integral. Let $\tilde{X}\to X$ be the blowing up of $X$ along the intersection $\bigcap\limits_{t\in \PP^1} H_t\cap X,$ which is a reduced $0$-dimensional scheme of degree $4.$ Let $\pi\colon \tilde{X}\to \PP^1$ be from this blowing up. Then the fiber over $t\in \PP^1(\overline{K})$ is $H_t\cap \overline{X},$ which is integral. The generic fibre of $\pi$ is a smooth intersection of two quadrics in $\PP^3.$ Hence it is a curve of genus $1.$
	\end{proof}
	\begin{remark}\label{remark: degree 4 points on fibres}
		In the proof of this theorem, the fiber over
		each point in $\PP^1(K)$ is a intersection of two quadrics in $\PP^3.$ Hence every rational fibre of $\pi$ has a $0$-cycle of degree $4.$
	\end{remark}
	
	\section{Genus-$1$ curves violating the local-global principle}\label{section: main result}
	In this section, we find a genus-$1$ curve violating the local-global principle, and this curve has 
	a $0$-cycle of degree $4.$ 
	\begin{prop}\label{proposition: genus-$1$ curve $C$ violating the local-global principle}
		For any number field $K,$ there exists a genus-$1$ curve $C$ violating the local-global principle. Furthermore, this curve $C$ has a $0$-cycle of degree $4.$
	\end{prop}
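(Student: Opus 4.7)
The plan is to take $C$ to be a smooth fiber, over a suitably chosen $t\in\PP^1(K)$, of the fibration $\pi\colon \tilde{X}\to \PP^1$ constructed in Proposition \ref{proposition: existence of genus 1 fiberation and every fibre is integral}, applied to the del Pezzo surface $X$ produced by Theorem \ref{theorem: a del Pezzo surface not LGP}. Thus $X$ is a smooth intersection of two quadrics in $\PP^4_K$ with $X(K_v)\neq\emptyset$ for every $v\in\Omega_K$ but $X(K)=\emptyset$; the birational morphism $\rho\colon\tilde{X}\to X$ blows up the degree-$4$ base locus $B$ of a Lefschetz pencil on $X$; and $\pi$ has geometrically integral fibers with a smooth genus-$1$ generic fiber.

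First I transfer local solvability from $X$ to $\tilde{X}$: since $B$ is a zero-dimensional closed subscheme of the smooth surface $X$ and $X(K_v)$ is nonempty and open in the $v$-adic topology, the complement $X(K_v)\setminus B(K_v)$ is nonempty, and any point of it lifts uniquely through $\rho$ to $\tilde{X}(K_v)$. Next, I apply the fibration method to locate $t$. Applying the Lang-Weil estimates to the geometrically integral fibers of a good integral model of $\pi$, and then Hensel's lemma, one sees that for all but a finite set $S\subset\Omega_K$ of places (places of bad reduction and those with very small residue field), every $t\in\PP^1(K_v)$ already satisfies $\tilde{X}_t(K_v)\neq\emptyset$. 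For $v\in S$ the subset $\pi(\tilde{X}(K_v))\subset\PP^1(K_v)$ is nonempty and $v$-adically open, since $\pi$ is smooth at every smooth $K_v$-point of a fiber. Weak approximation on $\PP^1$ then produces $t\in\PP^1(K)$ lying in $\pi(\tilde{X}(K_v))$ for each $v\in S$ and avoiding the finite set of $t$-values in $\PP^1\cap X^*$ for which $\tilde{X}_t$ is singular. Setting $C := \tilde{X}_t$ yields a smooth curve of genus one with $C(K_v)\neq\emptyset$ for every $v\in\Omega_K$.

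Finally I verify $C(K)=\emptyset$ and exhibit the degree-$4$ zero-cycle. If $P\in C(K)$ lies outside the exceptional divisor $E$ of $\rho$, then $\rho(P)\in X(K)$, contradicting $X(K)=\emptyset$; if instead $P\in E(K)$, then $P$ belongs to some $\overline{K}$-irreducible component $E_p\cong \PP^1_{\overline{K}}$ of $E$ corresponding to a geometric point $p$ of $B$. Since $\Gal(\overline{K}/K)$ permutes the components $E_p$ compatibly with its action on the geometric points of $B$, a Galois-fixed $P$ would force $p$ to be a $K$-rational point of $B\subset X$, again contradicting $X(K)=\emptyset$. Hence $C(K)=\emptyset$. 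By Remark \ref{remark: degree 4 points on fibres}, $C$ is realized as a smooth intersection of two quadrics in $\PP^3$, so any $K$-rational hyperplane section of $C$ gives a zero-cycle of degree $4$ on $C$ defined over $K$. The principal obstacle is the fibration step; it is resolved cleanly because Proposition \ref{proposition: existence of genus 1 fiberation and every fibre is integral} provides geometric integrality of every fiber, which removes any vertical splitting obstruction and reduces the argument to the classical combination of Lang-Weil, Hensel's lemma, and weak approximation on $\PP^1$.
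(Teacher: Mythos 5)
Your proof is correct and takes essentially the same route as the paper: build the genus-$1$ fibration $\pi\colon\tilde{X}\to\PP^1$ from a Lefschetz pencil on a degree-$4$ del Pezzo surface violating the Hasse principle, apply the fibration method (the paper simply cites \cite[Lemma 3.1]{CP00} where you re-derive it via Lang--Weil and Hensel) together with weak approximation to select a smooth fiber $C$ with points everywhere locally, rule out $K$-points using the blow-down, and obtain the degree-$4$ zero-cycle from the realization of $C$ as an intersection of two quadrics in $\PP^3$. One minor redundancy: since $\rho\colon\tilde{X}\to X$ is a morphism (not merely a rational map), $\rho(P)\in X(K)$ for every $P\in C(K)$ regardless of whether $P$ lies on the exceptional divisor, so the Galois-equivariance argument in the case $P\in E(K)$ is superfluous.
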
 
	
	\begin{proof}
		By Theorem \ref{theorem: a del Pezzo surface not LGP}, we choose a del Pezzo surface $X$ of
		degree $4$ over $K,$  violating the local-global principle. We choose a smooth, projective and geometrically integral surface $\tilde{X}$ as in the proof of Proposition \ref{proposition: existence of genus 1 fiberation and every fibre is integral}. Let $\pi\colon \tilde{X}\to \PP^1$ be a dominant morphism such that every geometric fiber is integral, and the generic fibre of $\pi$ is a curve of genus $1.$ Since $X$ violates the local-global principle, $X(\AA_K)\neq \emptyset$ and $X(K)=\emptyset.$ By the implicit function theorem, $\tilde{X}(\AA_K)\neq \emptyset.$ Since $X$ and $\tilde{X}$ are smooth, projective and geometrically integral surface, and they are birationally equivalent, by comparing the exceptional curves between $X$ and $\tilde{X},$ $X(K)=\emptyset$ implies $\tilde{X}(K)=\emptyset.$ Hence $\tilde{X}$ violates the local-global principle. Since every geometric fiber of $\pi$ is integral, by the fibration method \cite[Lemma 3.1]{CP00} (firstly used in \cite{CTSSD87a} and \cite{CTSSD87b}), there is a finite set of places $S\subset \Omega_K$ such that for every place $v\notin S,$ $\pi(X(K_v))=\PP^1(K_v).$ Since the generic fibre of $\pi$ is a curve of genus $1,$ by semicontinuity theorem \cite[Theorem 12.8]{Ha97}, there exists an open Zariski subset $U\in\PP^1$ such that every geometric fiber over points in $U$ is a smooth curve of genus $1.$
		For $v\in S,$ by the implicit function theorem, the set $\pi(X(K_v))\cap U(K_v),$ denoted by $U_v,$ is a nonempty open subset of $U(K_v).$ For the nonempty open subset $\prod\limits_{v\in S}U_v\times \prod\limits_{v\notin S}\PP^1(K_v)\subset \PP^1(\AA_K),$ since $\PP^1$ satisfies weak approximation, we can choose a $P\in (\prod\limits_{v\in S}U_v\times \prod\limits_{v\notin S}\PP^1(K_v))\cap \PP^1(K).$ Let $C=\pi^{-1}(P)$ be the fibre over $P.$ Then $C$ is a genus-$1$ curve violating the local-global principle. By Remark \ref{remark: degree 4 points on fibres}, the curve $C$ has a $0$-cycle of degree $4.$
	\end{proof}
	\begin{remark}
		The method to proof this proposition is mainly from \cite[Subsection 3.1]{Po10a}.
	\end{remark}
	
	Applying Proposition \ref{proposition: genus-$1$ curve $C$ violating the local-global principle}, we get our main theorem.
	\begin{thm}\label{thm: main theorem}
		For any number field $K,$ there exists an elliptic curve $E$ defined over $K$ such that $\Sha (K,E)[2]\neq 0.$ 
	\end{thm}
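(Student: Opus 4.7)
The plan is to let $C$ be the genus-$1$ curve produced by Proposition \ref{proposition: genus-$1$ curve $C$ violating the local-global principle} and take $E=\mathrm{Jac}(C)$, which is an elliptic curve over $K$ (the identity section makes it pointed). I would then argue that the class $[C]\in H^1(K,E)$ yields a nontrivial element of $\Sha(K,E)[2]$, possibly after multiplying by $2$.

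Since $C$ has local points at every place of $K$ but no global $K$-point, the torsor class $[C]\in H^1(K,E)$ lies in $\Sha(K,E)$ and is nonzero. Next, I would invoke the standard fact for a genus-$1$ curve that the period (the order of $[C]$ in $H^1(K,E)$) divides the index (the gcd of degrees of $K$-rational $0$-cycles on $C$). Since Proposition \ref{proposition: genus-$1$ curve $C$ violating the local-global principle} furnishes a $0$-cycle of degree $4$ on $C$, the index divides $4$, so the order $p$ of $[C]$ belongs to $\{2,4\}$. If $p=2$, then $[C]$ is itself a nonzero element of $\Sha(K,E)[2]$. If $p=4$, then $2[C]$ is a nonzero element of $\Sha(K,E)[2]$. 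In either case $\Sha(K,E)[2]\neq 0$, which is the conclusion.

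The substantive work has already been carried out: the entire difficulty is packaged into Proposition \ref{proposition: genus-$1$ curve $C$ violating the local-global principle}, which simultaneously produces a locally solvable genus-$1$ curve with no rational point and a $0$-cycle of degree $4$. Given that, the passage from $C$ to a $2$-torsion class in $\Sha$ of its Jacobian is essentially formal, and the only extra ingredient needed is the period-divides-index inequality for genus-$1$ curves. The main obstacle, therefore, is not in the present theorem but in the construction guaranteeing that the index is as small as $4$; without that control on the index, one would only conclude $[C]$ is a nontrivial element of $\Sha(K,E)$ of some unspecified order, not necessarily $2$-power.
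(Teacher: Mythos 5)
Your proposal is correct and follows essentially the same route as the paper: take $E=\mathrm{Jac}(C)$, observe $[C]\in\Sha(K,E)$ is nonzero, use the degree-$4$ $0$-cycle (i.e.\ period divides index) to get $4[C]=0$, and conclude that $[C]$ or $2[C]$ is a nonzero element of $\Sha(K,E)[2]$. The paper simply asserts $4[C]=0$ from the $0$-cycle without naming the period-divides-index fact, but the argument is identical.
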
 
	
	\begin{proof}
		By Proposition \ref{proposition: genus-$1$ curve $C$ violating the local-global principle}, we choose a genus-$1$ curve $C$ violating the local-global principle, and $C$ has a $0$-cycle of degree $4.$ Let $E$ be the Jacobian of $C.$ Then $E$ is an elliptic curve. Consider the class $[C]\in H^1(K, E).$ Since $C$ violates the local-global principle, we have $[C]\in \Sha (K,E)$ and $[C]\neq 0.$ 
		Since $C$ has a $0$-cycle of degree $4,$ we have $4[C]= 0$ in $\Sha (K,E).$ Hence $[C]$ or $2[C]$ is a nonzero element of $\Sha (K,E)[2]$. 
	\end{proof}

	\begin{footnotesize}
	\noindent\textbf{Acknowledgements.} The author would like to thank X.Z. Wang, D.S. Wei, and C. Lv for many fruitful discussions. The author is grateful to anonymous referees for their valuable suggestions. The author was partially supported by NSFC Grant No. 12071448.
\end{footnotesize}

\begin{bibdiv}
	\begin{biblist}
		
		\bib{KST89}{article}{
			author={B.~Kunyavskij, A.~Skorobogatov},
			author={Tsfasman, M.},
			title={Del {P}ezzo surfaces of degree four},
			date={1989},
			journal={M\'em. Soc. Math. Fr.},
			volume={37},
			pages={1\ndash 113},
		}
		
		\bib{BSD75}{article}{
			author={Birch, B.},
			author={Swinnerton-Dyer, H.},
			title={The {H}asse problem for rational surfaces},
			date={1975},
			journal={J. Reine Angew. Math. Collection of articles dedicated to Helmut
				Hasse on his seventy-fifth birthday, III},
			pages={164\ndash 174},
		}
		
		\bib{Cl09}{article}{
			author={Clark, P.~L.},
			title={Curves over global fields violating the {H}asse principle},
			date={2009},
			journal={Preprint, arXiv:0905.3459 [math.NT]},
		}
		
		\bib{CP00}{article}{
			author={Colliot-Th\'el\`ene, J.-L.},
			author={Poonen, B.},
			title={Algebraic families of nonzero elements of {S}hafarevich-{T}ate
				groups},
			date={2000},
			journal={J. Amer. Math. Soc.},
			volume={13},
			pages={83\ndash 99},
		}
		
		\bib{CTSSD87a}{article}{
			author={Colliot-Th\'el\`ene, J.-L.},
			author={Sansuc, J.-J.},
			author={Swinnerton-Dyer, S.},
			title={Intersections of two quadrics and {C}h\^atelet surfaces {I}},
			date={1987},
			journal={J. Reine Angew. Math.},
			volume={373},
			pages={37\ndash 107},
		}
		
		\bib{CTSSD87b}{article}{
			author={Colliot-Th\'el\`ene, J.-L.},
			author={Sansuc, J.-J.},
			author={Swinnerton-Dyer, S.},
			title={Intersections of two quadrics and {C}h\^atelet surfaces {II}},
			date={1987},
			journal={J. Reine Angew. Math.},
			volume={374},
			pages={72\ndash 168},
		}
		
		\bib{Vo03}{book}{
			author={C.~Voisin, translated by L.~Schneps},
			title={Hodge theory and complex algebraic geometry {II}},
			series={Cambridge Studies in Advanced Mathematics},
			date={2003},
			volume={77},
		}
		
		\bib{DK73}{book}{
			author={Deligne, P.},
			author={Katz, N.},
			title={Groupes de monodromie en geometrie algebrique ({SGA} $7$ {II})},
			series={Lecture Notes in Mathematics},
			publisher={Springer-Verlag},
			date={1973},
			volume={340},
		}
		
		\bib{Ha97}{book}{
			author={Hartshorne, R.},
			title={Algebraic geometry},
			series={Graduate Texts in Mathematics},
			publisher={Springer-Verlag},
			date={1997},
			volume={52},
		}
		
		\bib{HKT13}{article}{
			author={Hassett, B.},
			author={Kresch, A.},
			author={Tschinkel, Y.},
			title={On the moduli of degree $4$ del {P}ezzo surfaces},
			date={2013},
			journal={Ann. Inst. Fourier, Grenoble},
		}
		
		\bib{JS17}{article}{
			author={Jahnel, J.},
			author={Schindler, D.},
			title={Del {P}ezzo surfaces of degree four violating the {H}asse
				principle are {Z}ariski dense in the moduli scheme},
			date={2017},
			journal={Ann. Inst. Fourier, Grenoble},
			volume={67 (4)},
			pages={1783\ndash 1807},
		}
		
		\bib{Ko99}{book}{
			author={Koll\'ar, J.},
			title={Rational curves on algebraic varieties},
			series={Ergebnisse der Mathematik und ihrer Grenzgebiete},
			publisher={Springer-Verlag},
			volume={3. Folge. Band 32},
		}
		
		\bib{La02}{book}{
			author={Lang, S.},
			title={Algebra},
			series={Graduate Texts in Mathematics},
			publisher={Springer-Verlag},
			date={2002},
			volume={211},
		}
		
		\bib{La04}{book}{
			author={Lang, S.},
			title={Linear algebra},
			series={Undergraduate Texts in Mathematics},
			publisher={Springer-Verlag},
			date={2004},
		}
		
		\bib{Li40}{article}{
			author={Lind, C.-E.},
			title={Untersuchungen {\"u}ber die rationalen {P}unkte der ebenen
				kubischen {K}urven vom {G}eschlecht {E}ins},
			language={German},
			date={1940},
			journal={Thesis, University of Uppsala},
			volume={1940},
			pages={97},
		}
		
		\bib{Po10a}{article}{
			author={Poonen, B.},
			title={Curves over every global field violating the local-global
				principle},
			date={2010},
			journal={J. Math. Sci.},
			volume={171},
			number={6},
			pages={782\ndash 785},
		}
		
		\bib{Po17}{book}{
			author={Poonen, B.},
			title={Rational points on varieties},
			series={Graduate Studies in Mathematics},
			publisher={American Mathematical Society, Providence, RI},
			volume={186},
		}
		
		\bib{Re42}{article}{
			author={Reichardt, H.},
			title={Einige im {K}leinen {\"u}berall l{\"o}sbare, im {G}rossen
				unl{\"o}sbare diophantische {G}leichungen},
			language={German},
			date={1942},
			journal={J. reine angew. Math.},
			volume={184},
			pages={12\ndash 18},
		}
		
		\bib{Re72}{article}{
			author={Reid, M.},
			title={The complete intersection of two or more quadrics},
			date={1972},
			journal={Ph.D.\ thesis, Trinity College, Cambridge},
		}
		
		\bib{Se51}{article}{
			author={Selmer, E.S.},
			title={The {D}iophantine equation $ax^3+by^3+cz^3=0$},
			date={1951},
			journal={Acta Math.},
			volume={85},
			pages={203\ndash 362},
		}
		
		\bib{Te05}{book}{
			author={Tevelev, E.},
			title={Projective duality and homogeneous spaces},
			series={Encyclopaedia of Mathematical Sciences},
			publisher={Springer-Verlag},
			date={2005},
			volume={133},
		}
		
		\bib{Vi10}{article}{
			author={Viray, B.},
			title={The algebraic {B}rauer-{M}anin obstruction on {C}h\^atelet
				surfaces, degree $4$ del {P}ezzo surfaces, and {E}nriques surfaces},
			date={2010},
			journal={Ph.D.\ thesis, University of California, Berkeley},
		}
		
		\bib{Wu22c}{article}{
			author={Wu, H.},
			title={On genus one curves violating the local-global principle},
			date={2022},
			journal={Preprint, arXiv:2112.02470v2 [math.NT]},
		}
		
	\end{biblist}
\end{bibdiv}

\end{document}